\newcommand{\XYMATRIX}{\xymatrix@M=6pt}
\newcommand{\aremb}{\ar@{^{(}->}}
\newcommand{\arembfrom}{\ar@{<-^{)}}}
\numberwithin{equation}{section}
\theoremstyle{plain}
\newtheorem{THM}{Theorem}
\theoremstyle{definition}
\newtheorem{DEF}[THM]{Definition}
\newtheorem{EX}[THM]{Example}
\theoremstyle{remark}
\newcommand{\quotient}[2]{\genfrac{[}{]}{0pt}{}{#1}{#2}}
\renewcommand{\le}{\leqslant}
\renewcommand{\ge}{\geqslant}
\newcommand{\0}{\varnothing}
\renewcommand{\sec}{\cap}
\renewcommand{\phi}{\varphi}
\renewcommand{\epsilon}{\varepsilon}
\newcommand{\UNION}{\bigcup}
\newcommand{\CC}{\mathbf{C}}
\newcommand{\DD}{\mathbf{D}}
\newcommand{\KK}{\mathbf{K}}
\newcommand{\union}{\cup}
\newcommand{\Boxed}[1]{\mbox{$#1$}}
\newcommand{\id}{\mathrm{id}}
\newcommand{\Ob}{\mathrm{Ob}}
\newcommand{\op}{\mathrm{op}}
\newcommand{\Arr}{\mathrm{Arr}}
\newcommand{\cod}{\mathrm{cod}}
\newcommand{\calA}{\mathcal{A}}
\newcommand{\calB}{\mathcal{B}}
\newcommand{\calC}{\mathcal{C}}
\newcommand{\calD}{\mathcal{D}}
\newcommand{\calE}{\mathcal{E}}
\newcommand{\calS}{\mathcal{S}}
\newcommand{\CHemb}{\mathbf{Ch}}
\newcommand{\CHrs}{\mathbf{Ch}_{\mathit{rs}}}
\newcommand{\PERMquo}{\mathbf{Perm}_{\mathit{quo}}}
\DeclareMathOperator{\dom}{dom}
\title{A Dual Ramsey Theorem for Permutations}
\author{Dragan Ma\v sulovi\'c\thanks{Supported by the Grant No.\ 174019 of the Ministry of Education, Science and Technological Development of the Republic of Serbia.}\\
\small Department of Mathematics and Informatics\\[-0.8ex]
\small Faculty of Sciences, University of Novi Sad\\[-0.8ex] 
\small Trg Dositeja Obradovi\'ca 3, 21000 Novi Sad, Serbia\\
\small\tt dragan.masulovic@dmi.uns.ac.rs\\
}
\date{Oct 30, 2017}
\begin{document}

\maketitle

\begin{abstract}
  In 2012 M.\ Soki\'c proved that the class of all finite permutations has the
  Ramsey property. Using different strategies
  the same result was then reproved in 2013 by J.\ B\"ottcher and J.\ Foniok, in 2014 by M.\ Bodirsky and
  in 2015 yet another proof was provided by M.\ Soki\'c.

  Using the categorical reinterpretation of the Ramsey property in this paper we prove
  that the class of all finite permutations has the dual Ramsey property as well.
  It was Leeb who pointed out in 1970 that the use of category theory can be quite helpful
  both in the formulation and in the proofs of results pertaining to structural Ramsey theory.
  In this paper we argue that this is even more the case when dealing with the dual Ramsey property.

  \bigskip\noindent \textbf{Keywords:} dual Ramsey property; finite permutations
  
  \bigskip\noindent \textbf{\small Mathematics Subject Classifications:} 05C55, 18A99
\end{abstract}

\section{Introduction}

Generalizing the classical results of F.~P.~Ramsey from the late 1920's, the structural Ramsey theory originated at
the beginning of 1970's in a series of papers (see \cite{N1995} for references).
We say that a class $\KK$ of finite structures has the \emph{Ramsey property} if the following holds:
for any number $k \ge 2$ of colors and all $\calA, \calB \in \KK$ such that $\calA$ embeds into $\calB$
there is a $\calC \in \KK$
such that no matter how we color the copies of $\calA$ in $\calC$ with $k$ colors, there is a \emph{monochromatic} copy
$\calB'$ of $\calB$ in $\calC$ (that is, all the copies of $\calA$ that fall within $\calB'$ are colored by the same color).
Many classes of structures were shown to have the Ramsey property:
finite linearly ordered graphs~\cite{AH, Nesetril-Rodl},
finite posets together with an additional linear extension of the poset ordering~\cite{Nesetril-Rodl-1984},
finite linearly ordered metric spaces~\cite{Nesetril-metric}, and so on.

In 2012 M.\ Soki\'c proved that the class of all finite permutations has the Ramsey property~\cite{sokic1}.
Using different strategies the same result was then reproved in 2013 by J.\ B\"ottcher and J.\ Foniok~\cite{bottcher-foniok},
in 2014 by M.\ Bodirsky~\cite{bodirsky} and in 2015 yet another proof was provided by M.\ Soki\'c.
Discussing the Ramsey property in the context of permutations
relies on P.~J.~Cameron's reinterpretation of permutations in model-theoretic terms~\cite{cameron-perm} as follows.
From a traditional point of view
a permutation of a set $A$ is any bijection $f : A \to A$. If $A$ is finite, say $A = \{a_1, a_2, \ldots, a_n\}$,
then each permutation $f : A \to A$ can be represented as
$
    f = \begin{pmatrix}
      a_1 & a_2 & \ldots & a_n \\
      a_{i_1} & a_{i_2} & \ldots & a_{i_n}
    \end{pmatrix}
$.
So, in order to specify a permutation it suffices to specify two linear orders on $A$: the ``standard'' order
$a_1 < a_2 < \ldots < a_n$ on $A$, and the permuted order $a_{i_1} \sqsubset a_{i_2} \sqsubset \ldots \sqsubset a_{i_n}$.
In this paper we adopt P.~J.~Cameron's point of view and say that a \emph{permutation} is a triple
$(A, \Boxed<, \Boxed\sqsubset)$ where $<$ and $\sqsubset$ are linear orders on~$A$.

Using the categorical reinterpretation of the Ramsey property as proposed in~\cite{masulovic-ramsey}
we prove in this paper that the class of finite permutations has the dual Ramsey property.
Instead of embeddings, which are crucial for the notion of a subpermutation in the above ``direct''
Ramsey result, we shall consider special surjective maps that we refer to as quotient maps for permutations.
These quotient maps are strongly motivated by the notion of minors for permutations suggested, in a different context,
by E.~Lehtonen in~\cite{erkko1}.

It was Leeb who pointed out in 1970 \cite{leeb-cat} that the use of category theory can be quite helpful
both in the formulation and in the proofs of results pertaining to structural Ramsey theory.
In this paper we argue that this is even more the case when dealing with the dual Ramsey property.
Our strategy is to take a ``direct'' Ramsey result,
provide a purely categorical proof of the result and then capitalize on the Duality Principle
(an intrinsic principle of category theory) which states that if a statement is true in a category~$\CC$
then the dual of the statement is true in the opposite category~$\CC^\op$.

In Section~\ref{opos.sec.prelim} we
give a brief overview of standard notions referring to finite linearly ordered sets and category theory,
and conclude with the reinterpretation of the Ramsey property in the language of category theory.
In Section~\ref{drpperm.sec.transfer} we consider two ways to transfer the Ramsey property from a category
to another category. We first show a Ramsey-type theorem for products of categories generalizing thus
the Finite Product Ramsey Theorem for Finite Structures of M.~Soki\'c~\cite{sokic2}, and then
prove a simple result which enables us to transfer the Ramsey property from a category to its (not necessarily full) subcategory.
Using these two ``transfer principles'', starting from a categorical reinterpretation
of the Finite Dual Ramsey Theorem we infer in Section~\ref{drpperm.sec.drp}
a dual Ramsey theorem for the category of finite permutations.
  
\section{Preliminaries}
\label{opos.sec.prelim}

In this section we give a brief overview of standard notions referring to linearly ordered sets
and category theory, and conclude with the reinterpretation of the Ramsey property in the language of category theory.

\subsection{Chains and permutations}

  A \emph{chain} is a pair $(A, \Boxed<)$ where $<$ is a linear ($=$~total) order on~$A$.
  In case $A$ is finite, instead of $(A, \Boxed<)$ we shall simply write $A = \{a_1 < a_2 < \ldots < a_n\}$.

  It is easy to see that a map $f : A \to B$ between two chains $(A, \Boxed{<})$ and
  $(B, \Boxed{<})$ is an embedding if and only if $x < y \Rightarrow f(x) < f(y)$ for all
  $x, y \in A$.
  
  Let $(A, \Boxed<)$ and $(B, \Boxed\sqsubset)$ be chains such that $A \sec B = \0$.
  Then $(A \union B, \Boxed{\Boxed< \oplus \Boxed\sqsubset})$ denotes the \emph{concatenation} of
  $(A, \Boxed<)$ and $(B, \Boxed\sqsubset)$, which is a chain on $A \union B$ such that every element of $A$
  is smaller then every element of $B$, the elements in $A$ are ordered linearly by~$<$, and the elements of $B$
  are ordered linearly by~$\sqsubset$.

  A \emph{permutation} on a set $A$ is a triple $(A, \Boxed<, \Boxed\sqsubset)$ where $<$ and $\sqsubset$
  are linear orders on~$A$~\cite{cameron-perm}. Again it is easy to see that an embedding of a permutation
  $(A, \Boxed<, \Boxed\sqsubset)$ into a permutation $(B, \Boxed<, \Boxed\sqsubset)$ is a map $f : A \to B$  such that
  $x < y \Rightarrow f(x) < f(y)$, and $x \sqsubset y \Rightarrow f(x) \sqsubset f(y)$,
  for all $x, y \in A$.

\subsection{Categories and functors}
\label{drpperm.sec.catth}

In order to keep the paper self-contained, in this section we provide a brief overview of some
elementary category-theoretic notions. For a detailed account of category theory we refer
the reader to~\cite{AHS}.

In order to specify a \emph{category} $\CC$ one has to specify
a class of objects $\Ob(\CC)$, a set of morphisms $\hom_\CC(\calA, \calB)$ for all $\calA, \calB \in \Ob(\CC)$,
the identity morphism $\id_\calA$ for all $\calA \in \Ob(\CC)$, and
the composition of mor\-phi\-sms~$\cdot$~so that
$\id_\calB \cdot f = f = f \cdot \id_\calA$ for all $f \in \hom_\CC(\calA, \calB)$, and
$(f \cdot g) \cdot h = f \cdot (g \cdot h)$ whenever the compositions are defined.
A morphism $f \in \hom_\CC(\calB, \calC)$ is \emph{monic} or \emph{left cancellable} if
$f \cdot g = f \cdot h$ implies $g = h$ for all $g, h \in \hom_\CC(\calA, \calB)$ where $\calA \in \Ob(\CC)$ is arbitrary.

\begin{EX}\label{drpperm.ex.CH-def}
  Finite chains and embeddings constitute a category that we denote by~$\CHemb$.
\end{EX}

\begin{EX}\label{drpperm.ex.CHrs-def}
  Following \cite{promel-voigt-surj-sets} we say that a surjection $f : \{a_1 < a_2 < \ldots < a_n\} \to
  \{b_1 < b_2 < \ldots < b_k\}$ between two finite chains
  is \emph{rigid} if $\min f^{-1}(b_i) < \min f^{-1}(b_j)$ whenever $i < j$.
  Equivalently, $f$ is rigid if for every $s \in \{1, \ldots, n\}$ there is a $t \in \{1, \ldots, k\}$ such that
  $f(\{a_1, \ldots, a_s\}) = \{b_1, \ldots, b_t\}$. (In other words, a rigid surjection maps an initial segment of a chain
  onto an initial segment of the other chain. Other than that, a rigid surjection is not required to respect the linear orders in question.)
  
  The composition of two rigid surjections is again a rigid surjection, so finite chains and rigid surjections constitute a category
  which we denote by~$\CHrs$.
\end{EX}

For a category $\CC$, the \emph{opposite category}, denoted by $\CC^\op$, is the category whose objects
are the objects of $\CC$, morphisms are formally reversed so that
$
  \hom_{\CC^\op}(\calA, \calB) = \hom_\CC(\calB, \calA)
$,
and so is the composition:
$
  f \cdot_{\CC^\op} g = g \cdot_\CC f
$.

A category $\DD$ is a \emph{subcategory} of a category $\CC$ if $\Ob(\DD) \subseteq \Ob(\CC)$ and
$\hom_\DD(\calA, \calB) \subseteq \hom_\CC(\calA, \calB)$ for all $\calA, \calB \in \Ob(\DD)$.
A category $\DD$ is a \emph{full subcategory} of a category $\CC$ if $\Ob(\DD) \subseteq \Ob(\CC)$ and
$\hom_\DD(\calA, \calB) = \hom_\CC(\calA, \calB)$ for all $\calA, \calB \in \Ob(\DD)$.

A \emph{functor} $F : \CC \to \DD$ from a category $\CC$ to a category $\DD$ maps $\Ob(\CC)$ to
$\Ob(\DD)$ and maps morphisms of $\CC$ to morphisms of $\DD$ so that
$F(f) \in \hom_\DD(F(\calA), F(\calB))$ whenever $f \in \hom_\CC(\calA, \calB)$, $F(f \cdot g) = F(f) \cdot F(g)$ whenever
$f \cdot g$ is defined, and $F(\id_\calA) = \id_{F(\calA)}$.

Categories $\CC$ and $\DD$ are \emph{isomorphic} if there exist functors $F : \CC \to \DD$ and $G : \DD \to \CC$ which are
inverses of one another both on objects and on morphisms.

The \emph{product} of categories $\CC_1$ and $\CC_2$ is the category $\CC_1 \times \CC_2$ whose objects are pairs $(\calA_1, \calA_2)$
where $\calA_1 \in \Ob(\CC_1)$ and $\calA_2 \in \Ob(\CC_2)$, morphisms are pairs $(f_1, f_2) : (\calA_1, \calA_2) \to (\calB_1, \calB_2)$ where
$f_1 : \calA_1 \to \calB_1$ is a morphism in $\CC_1$ and $f_2 : \calA_2 \to \calB_2$ is a morphism in $\CC_2$.
The composition of morphisms is carried out componentwise: $(f_1, f_2) \cdot (g_1, g_2) = (f_1 \cdot g_1, f_2 \cdot g_2)$.
Clearly, if $\tilde \calA = (\calA_1, \calA_2)$ and $\tilde \calB = (\calB_1, \calB_2)$ are objects of $\CC_1 \times \CC_2$
then
$$
  \hom_{\CC_1 \times \CC_2}(\tilde \calA, \tilde \calB) = \hom_{\CC_1}(\calA_1, \calB_1) \times \hom_{\CC_2}(\calA_2, \calB_2).
$$

An \emph{oriented multigraph} $\Delta$ consists of a collection (possibly a class) of vertices $\Ob(\Delta)$,
a collection of arrows $\Arr(\Delta)$, and two maps $\dom, \cod : \Arr(\Delta) \to \Ob(\Delta)$ which
assign to each arrow $f \in \Arr(\Delta)$ its domain $\dom(f)$ and its codomain $\cod(f)$.
If $\dom(f) = \gamma$ and $\cod(f) = \delta$ we write briefly $f : \gamma \to \delta$.
Intuitively, an oriented multigraph is a ``category without composition''. Therefore,
each category $\CC$ can be understood as an oriented multigraph
whose vertices are the objects of the category and whose arrows are the morphisms of the category.
A \emph{multigraph homomorphism} between oriented multigraphs $\Gamma$ and $\Delta$
is a pair of maps (which we denote by the same symbol) $F : \Ob(\Gamma) \to \Ob(\Delta)$ and
$F : \Arr(\Gamma) \to \Arr(\Delta)$ such that if $f : \sigma \to \tau$ in $\Gamma$, then
$F(f) : F(\sigma) \to F(\tau)$ in $\Delta$.

Let $\CC$ be a category. For any oriented multigraph $\Delta$, a \emph{diagram in $\CC$ of shape $\Delta$}
is a multigraph homomorphism $F : \Delta \to \CC$. Intuitively, a diagram in $\CC$ is an
arrangement of objects and morphisms in $\CC$ that has the shape of~$\Delta$.
A diagram $F : \Delta \to \CC$ is \emph{commutative} if morphisms along every two paths between the same
nodes compose to give the same morphism.

A diagram $F : \Delta \to \CC$ is \emph{has a commutative cocone in $\CC$} if there exists a $\calC \in \Ob(\CC)$
and a family of morphisms $(e_\delta : F(\delta) \to \calC)_{\delta \in \Ob(\Delta)}$ such that for every
arrow $g : \delta \to \gamma$ in $\Arr(\Delta)$ we have $e_\gamma \cdot F(g) = e_\delta$:
$$
  \xymatrix{
     & \calC & \\
    F(\delta) \ar[ur]^{e_\delta} \ar[rr]_{F(g)} & & F(\gamma) \ar[ul]_{e_\gamma}
  }
$$
(see Fig.~\ref{nrt.fig.3} for an illustration).
We say that $\calC$ together with the family of morphisms
$(e_\delta)_{\delta \in \Ob(\Delta)}$ is a \emph{commutative cocone in $\CC$ over the diagram~$F$}.

\begin{figure}
  $$
  \xymatrix{
    & & & & & \exists \calC &
  \\
    \bullet & \bullet & \bullet
    & & \calB_1 \ar@{.>}[ur] & \calB_2 \ar@{.>}[u] & \calB_1 \ar@{.>}[ul]
  \\
    \bullet \ar[u] \ar[ur] & \bullet \ar[ur] \ar[ul] & \bullet \ar[ul] \ar[u]
    & & \calA_1 \ar[u]^{f_1} \ar[ur]_(0.3){f_2} & \calA_2 \ar[ur]^(0.3){f_4} \ar[ul]_(0.3){f_3} & \calA_2 \ar[ul]^(0.3){f_5} \ar[u]_{f_6}
  \\
    & \Delta \ar[rrrr]^F  & & & & \CC  
  }
  $$
  \caption{A diagram in $\CC$ (of shape $\Delta$) with a commutative cocone}
  \label{nrt.fig.3}
\end{figure}
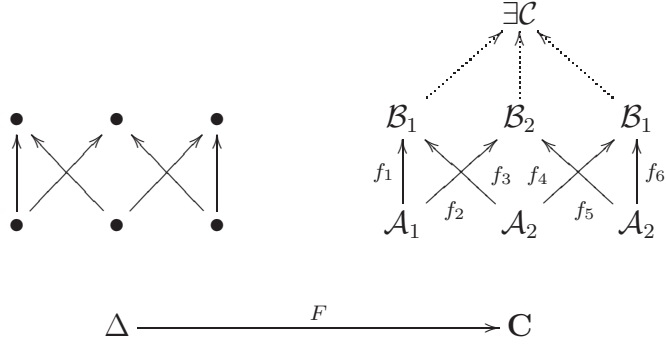

\subsection{The Ramsey property in the language of category theory}

Let $\CC$ be a category and $\calS$ a set. We say that
$
  \calS = \Sigma_1 \union \ldots \union \Sigma_k
$
is a \emph{$k$-coloring} of $\calS$ if $\Sigma_i \sec \Sigma_j = \0$ whenever $i \ne j$.
For an integer $k \ge 2$ and $\calA, \calB, \calC \in \Ob(\CC)$ we write
$
  \calC \longrightarrow (\calB)^{\calA}_k
$
to denote that for every $k$-coloring
$
  \hom_\CC(\calA, \calC) = \Sigma_1 \union \ldots \union \Sigma_k
$
there is an $i \in \{1, \ldots, k\}$ and a morphism $w \in \hom_\CC(\calB, \calC)$ such that
$w \cdot \hom_\CC(\calA, \calB) \subseteq \Sigma_i$.

\begin{DEF}
  A category $\CC$ has the \emph{Ramsey property} if
  for every integer $k \ge 2$ and all $\calA, \calB \in \Ob(\CC)$
  there is a $\calC \in \Ob(\CC)$ such that $\calC \longrightarrow (\calB)^{\calA}_k$.
  A category $\CC$ has the \emph{dual Ramsey property} if $\CC^\op$ has the Ramsey property.
\end{DEF}

Clearly, if $\CC$ and $\DD$ are isomorphic categories, then one of them has the (dual) Ramsey property if and only if
the other one does.

\begin{EX}\label{cerp.ex.FRP-ch}
    The category $\CHemb$ of finite chains and embeddings (Example~\ref{drpperm.ex.CH-def}) has the Ramsey property.
    This is just a reformulation of the Finite Ramsey Theorem~\cite{Ramsey}:

  \begin{quote}
    For all positive integers $k$, $a$, $m$ there is a positive integer $n$ such that
    for every $n$-element set $C$ and every $k$-coloring of the set $\binom Ca$ of all $a$-element subsets of $C$
    there is an $m$-element subset $B$ of $C$ such that $\binom Ba$ is monochromatic.
  \end{quote}
\end{EX}

\begin{EX}\label{cerp.ex.FDRT-ch}
    The category $\CHrs$ of finite chains and rigid surjections (Example~\ref{drpperm.ex.CHrs-def})
    has the dual Ramsey property. This is just a reformulation of the Finite Dual Ramsey Theorem \cite{GR}:

  \begin{quote}
    For all positive integers $k$, $a$, $m$ there is a positive integer $n$ such that
    for every $n$-element set $C$ and every $k$-coloring of the set $\quotient Ca$ of all partitions of
    $C$ with exactly $a$ blocks there is a partition $\beta$ of $C$ with exactly $m$ blocks such that
    the set of all partitions from $\quotient Ca$ which are coarser than $\beta$ is monochromatic.
  \end{quote}
  
  \noindent
  Namely, it was observed in~\cite{promel-voigt-surj-sets} that
  each partition of a finite linearly ordered set can be uniquely represented by the
  rigid surjection which takes each element of the underlying set to the minimum of the block it belongs to.
\end{EX}

\section{Transferring the Ramsey property between categories}
\label{drpperm.sec.transfer}

A typical generalization of the Finite Ramsey Theorem is the Finite Product Ramsey Theorem (Theorem~5 in~\cite[Ch.~5.1]{GRS})
which provides a Ramsey-type result for finite tuples of finite sets. This classical theorem was generalized by M.~Soki\'c
to Finite Product Ramsey Theorem for Finite Structures in~\cite{sokic2} where instead of finite tuples of finite sets we
deal with finite tuples of finite structures. We shall now provide a further generalization of this result in the form of
a Ramsey theorem for products of categories. The benefit of such a general result is that we can then invoke
Duality Principle of category theory to automatically infer statements about the dual Ramsey property.

\begin{THM}\label{sokic-prod}
  Let $\CC_1$ and $\CC_2$ be categories such that $\hom_{\CC_i}(\calA, \calB)$ is finite
  for each $i \in \{1, 2\}$ and all $\calA, \calB \in \Ob(\CC_i)$.

  $(a)$ If $\CC_1$ and $\CC_2$ both have the Ramsey property then
  $\CC_1 \times \CC_2$ has the Ramsey property.

  $(b)$ If $\CC_1$ and $\CC_2$ both have the dual Ramsey property then
  $\CC_1 \times \CC_2$ has the dual Ramsey property.
\end{THM}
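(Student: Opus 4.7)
The plan is a standard two-step pigeonhole argument following the classical proof of the Finite Product Ramsey Theorem, lifted to the categorical setting. Part (b) follows immediately from part (a) via the Duality Principle, since $(\CC_1 \times \CC_2)^\op$ is isomorphic (as a category) to $\CC_1^\op \times \CC_2^\op$; applying (a) to $\CC_1^\op$ and $\CC_2^\op$ yields (b). So I focus on part (a).

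Fix objects $\tilde\calA = (\calA_1, \calA_2)$ and $\tilde\calB = (\calB_1, \calB_2)$ in $\CC_1 \times \CC_2$, and an integer $k \ge 2$. First, using the Ramsey property of $\CC_2$, choose $\calC_2 \in \Ob(\CC_2)$ with $\calC_2 \longrightarrow (\calB_2)^{\calA_2}_k$, and set $t = |\hom_{\CC_2}(\calA_2, \calC_2)|$, which is finite by hypothesis. Next, using the Ramsey property of $\CC_1$ applied with $k^t$ colors, choose $\calC_1 \in \Ob(\CC_1)$ with $\calC_1 \longrightarrow (\calB_1)^{\calA_1}_{k^t}$. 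I claim that $\tilde\calC = (\calC_1, \calC_2)$ witnesses $\tilde\calC \longrightarrow (\tilde\calB)^{\tilde\calA}_k$ in $\CC_1 \times \CC_2$.

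To verify this, let $\chi$ be a $k$-coloring of $\hom_{\CC_1 \times \CC_2}(\tilde\calA, \tilde\calC) = \hom_{\CC_1}(\calA_1, \calC_1) \times \hom_{\CC_2}(\calA_2, \calC_2)$. Define an auxiliary coloring $\chi'$ of $\hom_{\CC_1}(\calA_1, \calC_1)$ by letting $\chi'(f_1)$ be the function $f_2 \mapsto \chi(f_1, f_2)$; this takes values in the color set $\{1,\ldots,k\}^{\hom_{\CC_2}(\calA_2, \calC_2)}$, which has cardinality $k^t$. By choice of $\calC_1$ there exist a morphism $w_1 : \calB_1 \to \calC_1$ in $\CC_1$ and a single function $\psi : \hom_{\CC_2}(\calA_2, \calC_2) \to \{1,\ldots,k\}$ such that $\chi(w_1 \cdot g_1,\, f_2) = \psi(f_2)$ for every $g_1 \in \hom_{\CC_1}(\calA_1, \calB_1)$ and every $f_2 \in \hom_{\CC_2}(\calA_2, \calC_2)$. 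Now apply the Ramsey property of $\CC_2$ to the $k$-coloring $\psi$: there exist $w_2 : \calB_2 \to \calC_2$ in $\CC_2$ and a color $i$ with $\psi(w_2 \cdot g_2) = i$ for all $g_2 \in \hom_{\CC_2}(\calA_2, \calB_2)$. Then $(w_1, w_2) : \tilde\calB \to \tilde\calC$ satisfies $\chi\bigl((w_1,w_2) \cdot (g_1, g_2)\bigr) = \psi(w_2 \cdot g_2) = i$ for every $(g_1, g_2) \in \hom_{\CC_1 \times \CC_2}(\tilde\calA, \tilde\calB)$, so $(w_1, w_2) \cdot \hom_{\CC_1 \times \CC_2}(\tilde\calA, \tilde\calB)$ is monochromatic.

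The main point (rather than a genuine obstacle) is to track that the color set of the auxiliary coloring $\chi'$ is itself a function space whose finite cardinality relies crucially on the standing hypothesis that hom-sets in $\CC_2$ are finite; without this, one could not legitimately invoke the Ramsey property of $\CC_1$ with $k^t$ colors. Once this bookkeeping is set up the argument is routine, and an obvious induction extends the theorem from two factors to any finite product.
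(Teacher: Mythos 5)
Your proof is correct and follows essentially the same two-step pigeonhole argument as the paper; the only difference is that you interchange the roles of the two factors (you inflate the number of colors to $k^t$ in $\CC_1$ using $t=|\hom_{\CC_2}(\calA_2,\calC_2)|$, whereas the paper does the reverse), which is an immaterial symmetric choice. Part (b) is handled by duality in both cases.
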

\begin{proof}
  $(a)$
  Take any $k \ge 2$ and $\tilde\calA = (\calA_1, \calA_2)$, $\tilde\calB = (\calB_1, \calB_2)$ in $\Ob(\CC_1 \times \CC_2)$
  such that $\tilde\calA \to \tilde\calB$ and let us show that there is a $\tilde\calC \in \Ob(\CC_1 \times \CC_2)$
  such that $\tilde\calC \longrightarrow (\tilde\calB)^{\tilde\calA}_k$. Take $\calC_1 \in \Ob(\CC_1)$ and
  $\calC_2 \in \Ob(\CC_2)$ so that $\calC_1 \longrightarrow (\calB_1)^{\calA_1}_{k}$ and
  $\calC_2 \longrightarrow (\calB_2)^{\calA_2}_{k^t}$,
  where $t$ is the cardinality of $\hom_{\CC_1}({\calA_1},{\calC_1})$. Put $\tilde \calC = (\calC_1, \calC_2)$.

  To show that $\tilde\calC \longrightarrow (\tilde\calB)^{\tilde\calA}_k$ take any coloring
  $$
    \chi : \hom_{\CC_1 \times \CC_2}({\tilde\calA},{\tilde\calC}) \to \{1, \ldots, k\}.
  $$
  Since $\hom_{\CC_1 \times \CC_2}({\tilde\calA},{\tilde\calC}) = \hom_{\CC_1}({\calA_1},{\calC_1}) \times \hom_{\CC_2}({\calA_2},{\calC_2})$,
  the coloring $\chi$ uniquely induces the $k^t$-coloring
  $$
    \chi' : \hom_{\CC_2}({\calA_2},{\calC_2}) \to \{1, \ldots, k\}^{\hom_{\CC_1}({\calA_1},{\calC_1})}.
  $$
  By construction, $\calC_{2} \longrightarrow (\calB_{2})^{\calA_{2}}_{k^t}$,
  so there is a $w_2 : \calB_2 \to \calC_2$ such that $w_2 \cdot \hom_{\CC_2}({\calA_2},{\calB_2})$ is $\chi'$-monochromatic.
  Let
  $$
    \chi'' : \hom_{\CC_1}({\calA_1},{\calC_1}) \to \{1, \ldots, k\}
  $$
  be the $k$-coloring defined by
  $$
    \chi''(e_1) = \chi(e_1, e)
  $$
  for some $e \in w_2 \cdot \hom_{\CC_2}({\calA_2},{\calB_2})$. Note that $\chi''$ is
  independent of the choice of $e$ because $w_2 \cdot \hom_{\CC_2}({\calA_2},{\calB_2})$ is $\chi'$-monochromatic.
  Since $\CC_1$ has the Ramsey property there is a morphism
  $
    w_1 : \calB_1 \to \calC_1
  $
  such that $w_1 \cdot \hom_{\CC_1}({\calA_1},{\calB_1})$
  is $\chi''$-monochromatic. It is now easy to show that for $\tilde w = (w_1, w_2)$ we have that
  $\tilde w \cdot \hom_{\CC_1 \times \CC_2}({(\calA_1, \calA_2)},{(\calB_1, \calB_2)})$ is $\chi$-monochromatic.
  
  $(b)$
  Assume now that both $\CC_1$ and $\CC_2$ have the dual Ramsey property. Then $\CC_1^\op$ and $\CC_2^\op$ have the
  Ramsey property, whence $\CC_1^\op \times \CC_2^\op$ has the Ramsey property by~$(a)$. By definition,
  the category $(\CC_1^\op \times \CC_2^\op)^\op = \CC_1 \times \CC_2$ then has the dual Ramsey property.
\end{proof}

Another way of transferring the Ramsey property is from a category to its subcategory. (For many deep results
obtained in this fashion see~\cite{Nesetril-AllThose}.) We shall now present a simple result which
enables us to transfer the Ramsey property from a category to its (not necessarily full) subcategory.

Consider a finite, acyclic, bipartite digraph where all the arrows go from one class of vertices into the other
and the out-degree of all the vertices in the first class is~2:
$$
  \xymatrix{
    \bullet & \bullet & \bullet & \ldots & \bullet \\
    \bullet \ar[u] \ar[ur] & \bullet \ar[ur] \ar[ul] & \bullet \ar[u] \ar[ur] & \ldots & \bullet \ar[u] \ar[ull]
  }
$$
\noindent
Such a digraph will be referred to as a \emph{binary digraph}.
A \emph{binary diagram} in a category $\CC$ is a diagram $F : \Delta \to \CC$ where $\Delta$ is a binary digraph,
$F$ takes the bottom row of $\Delta$ onto the same object, and takes the top row of $\Delta$ onto
the same object, Fig.~\ref{nrt.fig.2}.
A subcategory $\DD$ of a category $\CC$ is \emph{closed for binary diagrams} if every binary diagram
$F : \Delta \to \DD$ which has a commuting cocone in $\CC$ has a commuting cocone in~$\DD$.

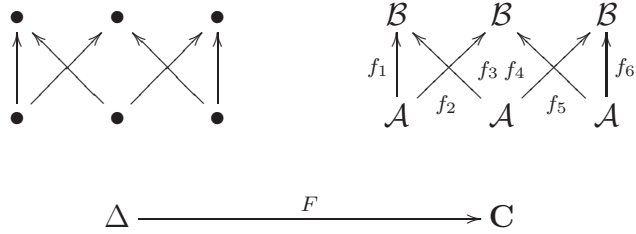
\begin{figure}
  $$
  \xymatrix{
    \bullet & \bullet & \bullet
    & & \calB & \calB & \calB
  \\
    \bullet \ar[u] \ar[ur] & \bullet \ar[ur] \ar[ul] & \bullet \ar[ul] \ar[u]
    & & \calA \ar[u]^{f_1} \ar[ur]_(0.3){f_2} & \calA \ar[ur]^(0.3){f_4} \ar[ul]_(0.3){f_3} & \calA \ar[ul]^(0.3){f_5} \ar[u]_{f_6}
  \\
    & \Delta \ar[rrrr]^F  & & & & \CC  
  }
  $$
  \caption{A binary diagram in $\CC$ (of shape $\Delta$)}
  \label{nrt.fig.2}
\end{figure}

\begin{THM}\label{nrt.thm.1}
  Let $\CC$ be a category such that every morphism in $\CC$ is monic and
  such that $\hom_\CC(\calA, \calB)$ is finite for all $\calA, \calB \in \Ob(\CC)$, and let $\DD$ be a
  (not necessarily full) subcategory of~$\CC$. If $\CC$ has the Ramsey property and $\DD$ is closed for binary diagrams,
  then $\DD$ has the Ramsey property.
\end{THM}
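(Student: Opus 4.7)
The plan is to use the Ramsey property of $\CC$ to produce an auxiliary object $\calC_0 \in \Ob(\CC)$ with $\calC_0 \longrightarrow (\calB)^{\calA}_k$ in $\CC$, then to encode every ``coincidence'' of the form $w_1 \cdot f_1 = w_2 \cdot f_2$ (with $w_i : \calB \to \calC_0$ in $\CC$ and $f_i : \calA \to \calB$ in $\DD$) into a single binary diagram in $\DD$, and finally to let the closedness of $\DD$ for binary diagrams hand us a witness object $\calC^* \in \Ob(\DD)$ in which all these coincidences are simultaneously realized.

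Concretely, I would fix $\calA, \calB \in \Ob(\DD)$, $k \geq 2$, and pick $\calC_0 \in \Ob(\CC)$ satisfying $\calC_0 \longrightarrow (\calB)^{\calA}_k$ in $\CC$. Using finiteness of hom-sets, I would build a binary diagram $F : \Delta \to \DD$ whose top vertices are indexed by $\hom_\CC(\calB, \calC_0)$ (each mapped by $F$ to $\calB$), and whose bottom vertices correspond to quadruples $(f_1, f_2, w_1, w_2)$ with $f_i \in \hom_\DD(\calA, \calB)$, $w_i \in \hom_\CC(\calB, \calC_0)$, $(f_1, w_1) \neq (f_2, w_2)$, and $w_1 \cdot f_1 = w_2 \cdot f_2$ in $\CC$; each such bottom vertex (mapped to $\calA$) would carry two arrows labeled $f_1$ and $f_2$, going to the top vertices indexed by $w_1$ and $w_2$. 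This is a legitimate binary diagram in $\DD$ because the labels lie in $\DD$, and it admits $\calC_0$ together with $e_w := w$ as a commuting cocone in $\CC$. By the closedness hypothesis, the same diagram has a commuting cocone in $\DD$: an object $\calC^* \in \Ob(\DD)$ together with $\DD$-morphisms $e_w : \calB \to \calC^*$ such that $e_{w_1} \cdot f_1 = e_{w_2} \cdot f_2$ in $\DD$ whenever the corresponding equation holds in $\CC$.

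To show $\calC^* \longrightarrow (\calB)^{\calA}_k$ in $\DD$, I would take a $k$-coloring $\chi$ of $\hom_\DD(\calA, \calC^*)$ and lift it to a $k$-coloring $\chi'$ of $\hom_\CC(\calA, \calC_0)$ by the rule $\chi'(g) := \chi(e_w \cdot f)$ whenever $g$ admits a factorization $g = w \cdot f$ with $w \in \hom_\CC(\calB, \calC_0)$ and $f \in \hom_\DD(\calA, \calB)$ (and by a fixed color otherwise). Any two such factorizations either coincide or form an allowed quadruple, so the cocone equations force $e_{w_1} \cdot f_1 = e_{w_2} \cdot f_2$ and $\chi'$ is well defined. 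Feeding $\chi'$ into $\calC_0 \longrightarrow (\calB)^{\calA}_k$ produces $w^* \in \hom_\CC(\calB, \calC_0)$ with $w^* \cdot \hom_\CC(\calA, \calB)$ $\chi'$-monochromatic; restricting to $f \in \hom_\DD(\calA, \calB)$ yields that $e_{w^*} \cdot \hom_\DD(\calA, \calB)$ is $\chi$-monochromatic, and $e_{w^*} \in \hom_\DD(\calB, \calC^*)$ is the required witness.

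The subtle point I expect to dwell on is the well-definedness of $\chi'$, which is exactly what the binary-diagram construction is designed to secure. The monic hypothesis enters only to exclude the degenerate case $w_1 = w_2$ with $f_1 \neq f_2$ (which would give $w_1 \cdot f_1 = w_1 \cdot f_2$, contradicting left cancellability of $w_1$), so that every pair of genuinely distinct factorizations is accounted for by one of the quadruples I included in the diagram.
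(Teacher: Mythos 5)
Your proposal is correct and follows essentially the same route as the paper: the same binary diagram indexed by $\hom_\CC(\calB,\calC_0)$ and by coinciding factorizations, the same use of closedness to obtain a cocone in $\DD$, and the same lifted coloring (your ``fixed color otherwise'' plays the role of the paper's residual class $\Sigma'_1$, and your well-definedness check is exactly the paper's disjointness argument, with monicity handling the $w_1=w_2$ case in both).
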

\begin{proof}
  Take any $k \ge 2$ and $\calA, \calB \in \Ob(\DD)$ such that $\hom_\DD(\calA, \calB) \ne \0$. Since $\DD$ is a subcategory of $\CC$
  and $\CC$ has the Ramsey property, there is a $\calC \in \Ob(\CC)$ such that $\calC \longrightarrow (\calB)^\calA_k$.
  
  Let $\hom_\CC(\calB, \calC) = \{e_1, e_2, \ldots, e_n\}$.
  Let us now construct a binary diagram in $\DD$ as follows.
  Intuitively, for each $e_i \in \hom_\CC(\calB, \calC)$ we add a copy of $\calB$ to the diagram, and whenever $e_i \cdot u = e_j \cdot v$
  for some $u, v \in \hom_\DD(\calA, \calB)$ we add a copy of $\calA$ to the diagram together with two arrows:
  one going into the $i$th copy of $\calB$ labelled by $u$ and another one going into the $j$th copy of $\calB$ labelled by~$v$
  (note that, by the construction, this diagram has a commuting cocone in $\CC$):
  $$
  \xymatrix{
    & & \calC
  \\
    \calB \ar[urr]^{e_1} & \calB \ar[ur]_(0.6){e_i} & \ldots & \calB \ar[ul]^(0.6){e_j} & \calB \ar[ull]_{e_n}
  \\
    \calA \ar[u] \ar[ur] & \calA \ar[urr]_(0.3){v} \ar[u]_{u} & \ldots & \calA \ar[ur] \ar[ul] & \DD
    \save "2,1"."3,5"*[F]\frm{} \restore
  }
  $$
  Formally, let $\Delta$ be the binary diagram whose objects are
  \begin{align*}
    \Ob(\Delta) = \{1, 2, \ldots, n\} \union \{(u, v, i, j) : \; & 1 \le i,j \le n; \; i \ne j;\\
                  &u, v \in \hom_\DD(\calA, \calB); \; e_i \cdot u = e_j \cdot v\}
  \end{align*}
  and whose arrows are of the form $u : (u, v, i, j) \to i$ and $v : (u, v, i, j) \to j$.
  Let $F : \Delta \to \DD$ be the following diagram whose action on objects is:
  \begin{align*}
    F(i) &= \calB, && 1 \le i \le n,\\
    F((u, v, i, j)) &= \calA, && e_i \cdot u = e_j \cdot v,
  \end{align*}
  and whose action on morphisms is $F(g) = g$:
  $$
  \xymatrix{
    i &  & j
    & & \calB &  & \calB
  \\
      & (u, v, i, j) \ar[ur]_v \ar[ul]^u &  
    & &   & \calA \ar[ur]_v \ar[ul]^u & 
  \\
    & \Delta \ar[rrrr]^F  & & & & \CC  
  }
  $$

  As we have already observed in the informal discussion above, the diagram $F : \Delta \to \DD$ has a commuting cocone in $\CC$,
  so, by the assumption, it has a commuting cocone in~$\DD$. Therefore, there is
  a $\calD \in \Ob(\DD)$ and morphisms $f_i : \calB \to \calD$, $1 \le i \le n$, such that the following diagram in $\DD$ commutes:
  $$
  \xymatrix{
    & & \calD & & \DD
  \\
    \calB \ar[urr]^{f_1} & \calB \ar[ur]_(0.6){f_i} & \ldots & \calB \ar[ul]^(0.6){f_j} & \calB \ar[ull]_{f_n}
  \\
    \calA \ar[u] \ar[ur] & \calA \ar[urr]_(0.3){v} \ar[u]_{u} & \ldots & \calA \ar[ur] \ar[ul] & 
  }
  $$
  Let us show that in $\DD$ we have $\calD \longrightarrow (\calB)^\calA_k$. Take any $k$-coloring
  $$
    \hom_\DD(\calA, \calD) = \Sigma_1 \union \ldots \union \Sigma_k,
  $$
  and define a $k$-coloring
  $$
    \hom_\CC(\calA, \calC) = \Sigma'_1 \union \ldots \union \Sigma'_k
  $$
  as follows. For $j \in \{2, \ldots, k\}$ let
  $$
    \Sigma'_j = \{e_s \cdot u : 1 \le s \le n, u \in \hom_\DD(\calA, \calB), f_s \cdot u \in \Sigma_j \},
  $$
  and then let
  $$
    \Sigma'_1 = \hom_\CC(\calA, \calC) \setminus \UNION_{j=2}^k \Sigma'_j.
  $$
  Let us show that $\Sigma'_i \sec \Sigma'_j = \0$ whenever $i \ne j$. By the definition of $\Sigma'_1$
  it suffices to consider the case where $i \ge 2$ and $j \ge 2$. 
  Assume, to the contrary, that there is an $h \in \Sigma'_i \sec \Sigma'_j$ for some $i \ne j$, $i \ge 2$, $j \ge 2$.
  Then $h = e_s \cdot u$ for some $s$ and some $u \in \hom_\DD(\calA, \calB)$ such that $f_s \cdot u \in \Sigma_i$ and
  $h = e_t \cdot v$ for some $t$ and some $v \in \hom_\DD(\calA, \calB)$ such that $f_t \cdot v \in \Sigma_j$.
  Then $e_s \cdot u = h = e_t \cdot v$. Clearly, $s \ne t$ and we have that $(u, v, s, t) \in \Ob(\Delta)$.
  (Suppose, to the contrary, that $s = t$. Then
  $e_s \cdot u = e_t \cdot v$ implies $u = v$ because $e_s = e_t$ and all the morphisms in $\CC$ are monic.
  But then $\Sigma_i \ni f_s \cdot u = f_t \cdot v \in \Sigma_j$, which contradicts the assumption that
  and $\Sigma_i \sec \Sigma_j = \0$.) Consequently,
  $f_s \cdot u = f_t \cdot v$ because $\calD$ and morphisms $f_i : \calB \to \calD$, $1 \le i \le n$, form
  a commuting cocone over $F : \Delta \to \DD$ in~$\DD$. Therefore, $f_s \cdot u = f_t \cdot v \in \Sigma_i \sec \Sigma_j$,
  which is not possible.
  
  Since, by construction, $\calC \longrightarrow (\calB)^\calA_k$, there is an $e_\ell \in \hom_\CC(\calB, \calC)$ and a $j$ such that
  $e_\ell \cdot \hom_\CC(\calA, \calB) \subseteq \Sigma'_j$. Let us show that
  $$
    f_\ell \cdot \hom_\DD(\calA, \calB) \subseteq \Sigma_j.
  $$
  Assume, first, that $j \ge 2$ and take any $u \in \hom_\DD(\calA, \calB)$. Since $e_\ell \cdot u \in \Sigma'_j$ it follows by the definition
  of $\Sigma'_j$ that $f_\ell \cdot u \in \Sigma_j$.
  
  Assume now that $j = 1$ and take any $u \in \hom_\DD(\calA, \calB)$. Suppose that $f_\ell \cdot u \notin \Sigma_1$.
  Then $f_\ell \cdot u \in \Sigma_m$ for some $m \ge 2$. But then $e_\ell \cdot u \in \Sigma'_m$.
  On the other hand, $e_\ell \cdot u \in \Sigma'_1$ by assumption ($j = 1$).
  This is in contradiction with the construction of $\Sigma'_1$.
\end{proof}

\section{A dual Ramsey theorem for permutations}
\label{drpperm.sec.drp}

Every dual Ramsey theorem relies on some notion of a ``surjective structure map''.
In case of permutations an appropriate notion has been suggested, in a different context,
by E.~Lehtonen in~\cite[Section 6]{erkko1} as follows. Let
$\sigma = a_{i_1} \sqsubset a_{i_2} \sqsubset \ldots \sqsubset a_{i_n}$ be a permutation of a finite linearly ordered set
$A = \{a_1 < a_2 < \ldots < a_n \}$ and let $\Pi$ be a partition of~$A$. Define $f_\Pi : A \to A$ by $f_\Pi(x) = \min_<([x]_\Pi)$, where
$[x]_\Pi$ denotes the block of $\Pi$ that contains $x$ and $\min_<$ means that the minimum is taken with respect to~$<$
(the ``standard'' ordering of~$A$).
Then take the tuple $f_\Pi(\sigma) = (f_\Pi(a_{i_1}), f_\Pi(a_{i_2}), \ldots, f_\Pi(a_{i_n}))$ and
remove the repeated elements leaving only the first occurrence of each. What remains is a permutation of $f_\Pi(A)$ that we refer to
as the \emph{quotient of~$\sigma$ by~$\Pi$}.

\begin{EX}\label{drpperm.ex.erkkosminors}
  Let $A = \{ 0 < 1 < 2 < 3 < 4 < 5 < 6 < 7 < 8 < 9\}$
  be a finite linearly ordered set and let
  $$
    \sigma = 6 \sqsubset 7 \sqsubset 9 \sqsubset 3 
    \sqsubset 2 \sqsubset 8 \sqsubset 4 \sqsubset 1 \sqsubset 0 \sqsubset 5 
  $$
  be a permutation of~$A$. Let $\Pi$ be the following partition of~$A$:
  $$
    \Pi = \big\{
      \{0, 1, 4, 9\},
      \{2, 6, 8\},
      \{3, 7\},
      \{5\}
    \big\}.
  $$
  Then
  $$
    f_\Pi = \begin{pmatrix}
      0 & 1 & 2 & 3 & 4 & 5 & 6 & 7 & 8 & 9 \\
      0 & 0 & 2 & 3 & 0 & 5 & 2 & 3 & 2 & 0
    \end{pmatrix},
  $$
  so
  $$
    f_\Pi(\sigma) = (2, 3, 0, 3, 2, 2, 0, 0, 0, 5).
  $$
  Finally the quotient of~$\sigma$ by~$\Pi$ is the permutation
  $2 \prec 3 \prec 0 \prec 5$ of the ordered set
  $f_\Pi(A) = \{ 0 \mathrel{<} 2 \mathrel{<} 3 \mathrel{<} 5 \}$.
\end{EX}

\begin{DEF}
  (cf.~\cite[Section 6]{erkko1})
  Let $(A, \Boxed<, \Boxed\sqsubset)$ and $(B, \Boxed<, \Boxed\sqsubset)$ be finite permutations. We say that a surjective
  map $f : A \to B$ is a \emph{quotient map for permutations} if $f$ is a rigid surjection from $(A, \Boxed<)$ to $(B, \Boxed<)$
  as well as a rigid surjection from $(A, \Boxed\sqsubset)$ to $(B, \Boxed\sqsubset)$.
  Let $\PERMquo$ denote the category whose objects are finite permutations and whose morphisms are
  quotient maps for permutations.
\end{DEF}

\begin{EX}
  Let us write the surjective map $f_\Pi : A \to f_\Pi(A)$ from Example~\ref{drpperm.ex.erkkosminors}
  first as the rigid surjection $f_\Pi : (A, \Boxed<) \to (f_\Pi(A), \Boxed{<})$
  and then as the rigid surjection $f_\Pi : (A, \Boxed\sqsubset) \to (f_\Pi(A), \Boxed\prec)$:
  \begin{align*}
    f_\Pi &= \begin{pmatrix}
      0 & 1 & 2 & 3 & 4 & 5 & 6 & 7 & 8 & 9 \\
      0 & 0 & 2 & 3 & 0 & 5 & 2 & 3 & 2 & 0
    \end{pmatrix},&& \text{rigid w.r.t.} \binom{\Boxed<}{\Boxed<},\\
    &= \begin{pmatrix}
      6 & 7 & 9 & 3 & 2 & 8 & 4 & 1 & 0 & 5 \\
      2 & 3 & 0 & 3 & 2 & 2 & 0 & 0 & 0 & 5 
    \end{pmatrix}, && \text{rigid w.r.t.} \binom{\Boxed\sqsubset}{\Boxed\prec}.
  \end{align*}
\end{EX}

\begin{THM}[The Dual Ramsey Theorem for Permutations]\ \newline
  The category $\PERMquo$ has the dual Ramsey property.
\end{THM}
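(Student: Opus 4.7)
The plan is to deduce this from the Finite Dual Ramsey Theorem (Example~\ref{cerp.ex.FDRT-ch}) by chaining the two transfer principles of Section~\ref{drpperm.sec.transfer}. Since $\CHrs$ has the dual Ramsey property, Theorem~\ref{sokic-prod}$(b)$ with $\CC_1 = \CC_2 = \CHrs$ yields that $\CHrs \times \CHrs$ also has it; equivalently, $(\CHrs \times \CHrs)^\op$ has the Ramsey property. I then apply Theorem~\ref{nrt.thm.1} with $\CC = (\CHrs \times \CHrs)^\op$ and $\DD = \PERMquo^\op$, the latter realised as a non-full subcategory via the faithful diagonal embedding $(A,\Boxed<,\Boxed\sqsubset) \mapsto ((A,\Boxed<),(A,\Boxed\sqsubset))$, $f \mapsto (f,f)$.

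The bookkeeping hypotheses of Theorem~\ref{nrt.thm.1} are routine: hom-sets in $\CHrs$ are finite, and every morphism of $(\CHrs \times \CHrs)^\op$ is monic because each coordinate is a set-theoretic surjection (hence epic in $\CHrs$), and this is preserved under products and flipped by taking the opposite. The whole content of the proof therefore lies in verifying that $\PERMquo^\op$ is closed for binary diagrams in $(\CHrs \times \CHrs)^\op$.

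To unwind this, a binary diagram in $\PERMquo^\op$ with top object $\calB = (B,\Boxed{<_B},\Boxed{\sqsubset_B})$ and bottom object $\calA$ has its arrows labelled by quotient maps $u,v : \calB \to \calA$. A commuting cocone in $(\CHrs \times \CHrs)^\op$ corresponds, in $\CHrs \times \CHrs$, to an object $\tilde\calD = ((D_1,\Boxed{<_1}),(D_2,\Boxed{<_2}))$ and, for each top vertex $\gamma$, a pair of rigid surjections $e_\gamma^1 : (D_1,\Boxed{<_1}) \to (B,\Boxed{<_B})$ and $e_\gamma^2 : (D_2,\Boxed{<_2}) \to (B,\Boxed{\sqsubset_B})$, subject to the compatibility $u \circ e_{\gamma_1}^\alpha = v \circ e_{\gamma_2}^\alpha$ for $\alpha \in \{1,2\}$ whenever the diagram contains a pair of arrows $(u,v)$ from a common bottom vertex into distinct tops $\gamma_1,\gamma_2$. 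Assuming $D_1 \cap D_2 = \varnothing$ (otherwise rename), I construct the cocone in $\PERMquo^\op$ with object
$$\calD' = (D_1 \cup D_2,\; \Boxed{\Boxed{<_1} \oplus \Boxed{<_2}},\; \Boxed{\Boxed{<_2} \oplus \Boxed{<_1}}),$$
so in the first linear order $D_1$ precedes $D_2$ while in the second $D_2$ precedes $D_1$; and for each top vertex $\gamma$ I define $e_\gamma' : \calD' \to \calB$ to be $e_\gamma^1$ on $D_1$ and $e_\gamma^2$ on $D_2$.

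The verification is then direct. Any initial segment of $\Boxed{<_1} \oplus \Boxed{<_2}$ is either contained in $D_1$ (in which case $e_\gamma'$ maps it, via $e_\gamma^1$, onto an $<_B$-initial segment of $B$) or contains all of $D_1$ (in which case its image is already all of $B$); the second-order rigidity is symmetric, so $e_\gamma'$ is a quotient map for permutations. The compatibility $u \circ e_{\gamma_1}' = v \circ e_{\gamma_2}'$ holds on $D_1$ by the first-coordinate hypothesis and on $D_2$ by the second. Theorem~\ref{nrt.thm.1} then delivers the Ramsey property of $\PERMquo^\op$, which is exactly the dual Ramsey property of $\PERMquo$. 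The real obstacle I expect is finding the correct construction of $\calD'$: the natural guess of a cartesian product $D_1 \times D_2$ with lexicographic orders is doomed as soon as $\Boxed{<_B} \ne \Boxed{\sqsubset_B}$, because the unique point at both lex-minima would have to map simultaneously to the $<_B$-minimum and the $\sqsubset_B$-minimum of $B$; the concatenation above sidesteps this by decoupling the two rigidity constraints into disjoint blocks.
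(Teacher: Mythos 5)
Your proposal is correct and follows essentially the same route as the paper: the paper likewise combines Theorem~\ref{sokic-prod} applied to $\CHrs^\op\times\CHrs^\op$ with Theorem~\ref{nrt.thm.1} applied to the diagonal subcategory, and its cocone object $\calE = ((C\union D, \Boxed{\Boxed{<^C}\oplus\Boxed{\sqsubset^D}}),(C\union D,\Boxed{\Boxed{\sqsubset^D}\oplus\Boxed{<^C}}))$ with the morphisms glued from the two coordinates is exactly your $\calD'$ and $e_\gamma'$. Your explicit verification of the monic and finiteness hypotheses of Theorem~\ref{nrt.thm.1} (surjections are epic, hence monic after passing to the opposite category) is a point the paper leaves implicit.
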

\begin{proof}
  The category $\CC = \CHrs^\op \times \CHrs^\op$ has the Ramsey property (Example~\ref{cerp.ex.FDRT-ch} and
  Theorem~\ref{sokic-prod}). Let $\DD$ be the following subcategory of $\CC$: objects of $\DD$ are pairs of ordered sets
  $((A, \Boxed<), (A, \Boxed\sqsubset))$ over the same finite set, and morphisms of $\DD$ are pairs
  $(f, f) : ((A, \Boxed<), (A, \Boxed\sqsubset)) \to ((B, \Boxed<), (B, \Boxed\sqsubset))$ such that both
  $f : (A, \Boxed<) \to (B, \Boxed<)$ and $f : (A, \Boxed\sqsubset) \to (B, \Boxed\sqsubset)$ are rigid
  surjections. It is easy to see that the categories $\DD$ and $\PERMquo$ are isomorphic, so,
  following Theorem~\ref{nrt.thm.1}, it suffices to show that $\DD$ is a subcategory of $\CC$ closed for binary diagrams.
  
  Take any $\calA = ((A, \Boxed<), (A, \Boxed\sqsubset))$ and $\calB = ((B, \Boxed<), (B, \Boxed\sqsubset))$ in $\Ob(\DD)$
  and let $F : \Delta \to \DD$ be a binary diagram which has a commuting cocone in $\CC$. Let
  $((C, \Boxed{<^C}), (D, \Boxed{\sqsubset^D}))$ together with the morphisms $e_i = (f_i, g_i)$, $1 \le i \le k$, be a
  commuting cocone in $\CC$ over~$F$:
  $$
  \xymatrix{
    & & ((C, \Boxed{<^C}), (D, \Boxed{\sqsubset^D}))
  \\
    \calB \ar[urr]^{e_1} & \calB \ar[ur]_(0.6){e_i} & \ldots & \calB \ar[ul]^(0.6){e_j} & \calB \ar[ull]_{e_k}
  \\
    \calA \ar[u] \ar[ur] & \calA \ar[urr]_(0.5){(v,v)} \ar[u]_(0.65){(u,u)} & \ldots & \calA \ar[ur] \ar[ul] & \DD
    \save "2,1"."3,5"*[F]\frm{} \restore
  }
  $$
  Without loss of generality we may assume that $C \sec D = \0$.
  Recall that $f_i : (C, \Boxed{<^C}) \to (B, \Boxed<)$ and $g_i : (D, \Boxed{\sqsubset^D}) \to (B, \Boxed\sqsubset)$
  are rigid surjections; the arrows in the diagram point in the opposite direction because $\CC = \CHrs^\op \times \CHrs^\op$.

  Let $\calE = ((C \union D, \Boxed{\Boxed{<^C} \oplus \Boxed{\sqsubset^D}}),
                (C \union D, \Boxed{\Boxed{\sqsubset^D} \oplus \Boxed{<^C}}))$ and for each $i \in \{1, \ldots, k\}$ define
  $\phi_i : C \union D \to B$ as follows
  $$
    \phi_i(x) = \begin{cases}
      f_i(x), & x \in C,\\
      g_i(x), & x \in D.
    \end{cases}
  $$
  Since $f_i : (C, \Boxed{<^C}) \to (B, \Boxed<)$ is a rigid surjection, it easily follows that
  $\phi_i : (C \union D, \Boxed{\Boxed{<^C} \oplus \Boxed{\sqsubset^D}}) \to (B, \Boxed<)$ is a rigid surjection.
  Analogously, since $g_i : (D, \Boxed{\sqsubset^D}) \to (B, \Boxed\sqsubset)$ is a rigid surjection, so is
  $\phi_i : (C \union D, \Boxed{\Boxed{\sqsubset^D} \oplus \Boxed{<^C}}) \to (B, \Boxed\sqsubset)$.
  Therefore, $\calE \in \Ob(\DD)$ and $(\phi_i, \phi_i) \in \hom_\DD(\calB, \calE)$ for all~$i$.
  $$
  \xymatrix{
    & & \calE & & \DD
  \\
    \calB \ar[urr]^{(\phi_1, \phi_1)} & \calB \ar[ur]|(0.4){(\phi_i, \phi_i)} & \ldots & \calB \ar[ul]|(0.4){(\phi_j, \phi_j)} & \calB \ar[ull]_{(\phi_k, \phi_k)}
  \\
    \calA \ar[u] \ar[ur] & \calA \ar[urr]_(0.5){(v,v)} \ar[u]_(0.65){(u,u)} & \ldots & \calA \ar[ur] \ar[ul] 
  }
  $$
  It is very easy to check that $(\phi_i, \phi_i) \cdot (u, u) = (\phi_j, \phi_j) \cdot (v, v)$ whenever
  $e_i \cdot (u, u) = e_j \cdot (v, v)$. Assume that $e_i \cdot (u, u) = e_j \cdot (v, v)$. Then
  $u \circ f_i = v \circ f_j$ and $u \circ g_i = v \circ g_j$
  (because $f \mathbin{\cdot_{\CC^\op}} u = u \mathbin{\cdot_{\CC}} f = u \circ f$ in this case).
  Now, take any $x \in C \union D$.
  If $x \in C$ then $u \circ \phi_i(x) = u \circ f_i(x) = v \circ f_j(x) = v \circ \phi_j(x)$.
  If, on the other hand, $x \in D$ then $u \circ \phi_i(x) = u \circ g_i(x) = v \circ g_j(x) = v \circ \phi_j(x)$.
  This concludes the proof.
\end{proof}

\section*{Acknowledgements}

The author would like to thank Erkko Lehtonen for a fruitful discussion which inspired this paper
and Miodrag Soki\'c for many insightful remarks.

\end{document}